\numberwithin{equation}{section}
\newtheorem{theorem}{Theorem}[section]
\theoremstyle{plain}
\newtheorem{corollary}[theorem]{Corollary}
\newtheorem{definition}{Definition}[section]
\newtheorem{lemma}{Lemma}[section]
\numberwithin{equation}{section}
\begin{document}
\title[Coefficient bounds for bi-univalent functions]{Coefficient bounds for
new subclasses of bi-univalent functions }
\author{Murat ÇA\u{G}LAR}
\address{Department of Mathematics, Faculty of Science, Ataturk University,
25240 Erzurum, Turkey.}
\email{mcaglar@atauni.edu.tr (Murat Ça\u{g}lar); orhan@atauni.edu.tr (Halit
Orhan)}
\author{Halit ORHAN}
\author{Nihat YA\u{G}MUR}
\address{Department of Mathematics, Faculty of Science and Arts, Erzincan
University, Erzincan, Turkey.}
\email{nhtyagmur@gmail.com (Nihat Yagmur)}
\subjclass[2000]{ 30C45.}
\keywords{Analytic and univalent function, bi-univalent function,
coefficient bounds.}

\begin{abstract}
In the present investigation, we consider two new subclasses $\mathcal{N}%
_{\Sigma }^{\mu }\left( \alpha ,\lambda \right) $ and $\mathcal{N}_{\Sigma
}^{\mu }\left( \beta ,\lambda \right) $ of bi-univalent functions defined in
the open unit disk $\mathcal{U}=\left\{ {z:\left\vert z\right\vert <1}%
\right\} .$ Besides, we find upper bounds for the second and third
coefficients for functions in these new subclasses.
\end{abstract}

\maketitle

\section{Introduction and definitions}

Let $\mathcal{A}$ denote the family of functions $f$ \ of the form%
\begin{equation}
f(z)=z+\sum\limits_{n=2}^{\infty }{a_{n}z^{n}}  \label{eq1}
\end{equation}%
which are analytic in the open unit disk $\mathcal{U}=\left\{ {z:\left\vert
z\right\vert <1}\right\} $. Further, let $\mathcal{S}$ denote the class of
functions which are univalent in $\mathcal{U}$.

It is well known that every function $f\in \mathcal{S}$ has an inverse $%
f^{-1},$ defined by%
\begin{equation*}
f^{-1}(f(z))=z\text{ \ \ }\left( z\in \mathcal{U}\right)
\end{equation*}%
and%
\begin{equation*}
f(f^{-1}(w))=w\text{ \ \ }\left( \left\vert w\right\vert <r_{0}(f);\text{ }%
r_{0}(f)\geq \frac{1}{4}\text{\ }\right)
\end{equation*}%
where%
\begin{equation*}
f^{-1}(w)=w-a_{2}w^{2}+(2a_{2}^{2}-a_{3})w^{3}-(5a_{2}^{3}-5a_{2}a_{3}+a_{4})w^{4}+....
\end{equation*}

A function $f\in \mathcal{A}$ is said to be bi-univalent in $\mathcal{U}$ if
both $f(z)$ and $f^{-1}(z)$ are univalent in $\mathcal{U}.$

Let $\Sigma $ denote the class of bi-univalent functions in $\mathcal{U}$
given by (\ref{eq1}).

Lewin \cite{Lew} investigated the class $\Sigma $ of bi-univalent functions
and showed that\ $\left\vert a_{2}\right\vert <1.51$ for the functions
belonging to $\Sigma .$ Subsequently, Brannan and Clunie \cite{Bran2}
conjectured that $\left\vert a_{2}\right\vert \leqq \sqrt{2}.$ On the other
hand, Netanyahu \cite{Net} showed that $\underset{f\in \Sigma }{\max }%
\left\vert a_{2}\right\vert =\frac{4}{3}.$ The coefficient estimate problem
for each of $\left\vert a_{n}\right\vert $ $\left( n\in 
\mathbb{N}
\setminus \left\{ 1,2\right\} ;\text{ }%
\mathbb{N}
:=\left\{ 1,2,...\right\} \right) $ is still an open problem.

Brannan and Taha \cite{Bran} (see also \cite{Taha}) introduced certain
subclasses of the bi-univalent function class $\Sigma $ similar to the
familiar subclasses $\mathcal{S}^{\ast }(\alpha )$ and $\mathcal{K}(\alpha )$
of starlike and convex functions of order $\alpha (0<\alpha \leq 1)$,
respectively (see \cite{Bran1}). Thus, following Brannan and Taha \cite{Bran}
(see also \cite{Taha}), a function $f\in \mathcal{A}$ is in the class $%
\mathcal{S}_{\Sigma }^{\ast }[\alpha ]$ of strongly bi-starlike functions of
order $\alpha (0<\alpha \leq 1)$ if each of the following conditions is
satisfied:%
\begin{equation*}
f\in \Sigma \text{ and }\left\vert \arg \left( \frac{zf^{\prime }(z)}{f(z)}%
\right) \right\vert <\frac{\alpha \pi }{2}\text{ \ \ }(0<\alpha \leq 1,\text{
}z\in \mathcal{U})
\end{equation*}%
and 
\begin{equation*}
\left\vert \arg \left( \frac{zg^{\prime }(w)}{g(w)}\right) \right\vert <%
\frac{\alpha \pi }{2}\text{ \ \ }(0<\alpha \leq 1,\text{ }z\in \mathcal{U}),
\end{equation*}%
where g is the extension of $f^{-1}$ to $\mathcal{U}$. The classes $\mathcal{%
S}_{\Sigma }^{\ast }\left( \alpha \right) $ and $\mathcal{K}_{\Sigma
}(\alpha )$ of bi-starlike functions of order $\alpha $ and bi-convex
functions of order $\alpha ,$ corresponding, respectively, to the function
classes $\mathcal{S}_{\Sigma }^{\ast }\left( \alpha \right) $ and $\mathcal{K%
}_{\Sigma }(\alpha ),$ were also introduced analogously. For each of the
function classes $\mathcal{S}_{\Sigma }^{\ast }\left( \alpha \right) $ and $%
\mathcal{K}_{\Sigma }(\alpha )$, they found non-sharp estimates on the first
two Taylor--Maclaurin coefficients $|a_{2}|$ and $|a_{3}|$ (for details, see
(\cite{Bran}, \cite{Taha})).

Recently, Srivastava et al. \cite{Sri} and Frasin and Aouf \cite{Fra} have
investigated estimate on the coefficients $|a_{2}|$ and $|a_{3}|$ for
functions in the subclasses $\mathcal{N}_{\Sigma }^{1}\left( \alpha
,1\right) $, $\mathcal{N}_{\Sigma }^{1}\left( \alpha ,\lambda \right) $ and $%
\mathcal{N}_{\Sigma }^{1}\left( \beta ,1\right) ,\;\mathcal{N}_{\Sigma
}^{1}\left( \beta ,\lambda \right) $ which are given by Section 2 and 3,
respectively. The main object of the present investigation is to introduce
two new subclasses $\mathcal{N}_{\Sigma }^{\mu }\left( \alpha ,\lambda
\right) $ and $\mathcal{N}_{\Sigma }^{\mu }\left( \beta ,\lambda \right) $
of the function class $\Sigma $ and to find estimate on the coefficients $%
|a_{2}|$ and $|a_{3}|$ for functions in these new subclasses of the function
class $\Sigma $ employing the techniques used earlier by Srivastava et al. 
\cite{Sri}. We also extend and improve the aforementioned results of
Srivastava et al. \cite{Sri} and Frasin and Aouf \cite{Fra}. Various known
or new special cases of our results are also pointed out.

Firstly, in order to derive our main results, we need to following lemma.

\begin{lemma}
\label{l1}\cite{Pom} If $p\in \mathcal{P},$ then $\left\vert
c_{k}\right\vert \leq 2$ for each $k,$ where $\mathcal{P}$ is the family of
all functions $p$ analytic in $\mathcal{U}$ for which $\Re p(z)>0,$ $%
p(z)=1+c_{1}z+c_{2}z^{2}+...$ for $z\in \mathcal{U}.$
\end{lemma}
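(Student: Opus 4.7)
The plan is to invoke the classical Herglotz representation for functions with positive real part. Since $p\in\mathcal{P}$ satisfies $p(0)=1$ and $\Re p(z)>0$ on $\mathcal{U}$, there exists a probability measure $\mu$ on $[0,2\pi]$ such that
$$p(z)=\int_{0}^{2\pi}\frac{e^{it}+z}{e^{it}-z}\,d\mu(t)\qquad (z\in\mathcal{U}).$$
This is exactly the representation that underlies the result cited from Pommerenke, so I would either quote it or sketch it via the Poisson integral of the boundary values of $\Re p$.

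Next, I would expand the kernel as a geometric series
$$\frac{e^{it}+z}{e^{it}-z}=1+2\sum_{n=1}^{\infty}e^{-int}z^{n},$$
which is valid for $|z|<1$ and converges uniformly on compact subsets of $\mathcal{U}$. Substituting this expansion into the Herglotz integral and interchanging summation and integration (justified by uniform convergence on compacta), then matching with $p(z)=1+\sum_{n\ge 1}c_{n}z^{n}$, I would read off the Fourier-type identity
$$c_{n}=2\int_{0}^{2\pi}e^{-int}\,d\mu(t).$$
The triangle inequality together with $\mu$ being a probability measure then yields
$$|c_{n}|\le 2\int_{0}^{2\pi}\bigl|e^{-int}\bigr|\,d\mu(t)=2,$$
which is exactly the desired conclusion, uniformly in $k=n$.

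The main obstacle, as is typical for this classical estimate, is not any of the routine calculations above but the invocation of the Herglotz representation itself, which is a genuine structure theorem and the reason the lemma is cited from Pommerenke rather than proved in place. A more self-contained alternative would introduce the Schwarz function $\omega(z)=(p(z)-1)/(p(z)+1)$, which satisfies $\omega(0)=0$ and maps $\mathcal{U}$ into $\mathcal{U}$, then recover $p=(1+\omega)/(1-\omega)=1+2\sum_{n\ge 1}\omega(z)^{n}$ and compare coefficients. However, this route requires increasingly delicate Cauchy-product bookkeeping as $n$ grows, whereas the Herglotz path delivers the uniform bound $|c_{n}|\le 2$ for every $n$ in essentially one line, so I would follow the Herglotz route.
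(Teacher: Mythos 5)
Your proof is correct: the Herglotz representation, the kernel expansion, and the resulting identity $c_n = 2\int_0^{2\pi} e^{-int}\,d\mu(t)$ with $\mu$ a probability measure immediately give $|c_n|\le 2$. The paper itself offers no proof (it only cites Pommerenke), and your argument is precisely the classical Carath\'eodory-lemma proof found in that reference, so there is nothing to reconcile.
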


\section{Coefficient bounds for the function class $\mathcal{N}_{\Sigma }^{%
\protect\mu }\left( \protect\alpha ,\protect\lambda \right) $}

\begin{definition}
\label{d1} A function $f(z)$ given by (\ref{eq1}) is said to be in the class 
$\mathcal{N}_{\Sigma }^{\mu }\left( \alpha ,\lambda \right) $ if the
following conditions are satisfied:%
\begin{equation}
f\in \Sigma \text{ and }\left\vert \arg \left( \left( 1-\lambda \right)
\left( \frac{f(z)}{z}\right) ^{\mu }+\lambda f^{\prime }(z)\left( \frac{f(z)%
}{z}\right) ^{\mu -1}\right) \right\vert <\frac{\alpha \pi }{2}  \label{eq2}
\end{equation}%
\begin{equation*}
(0<\alpha \leq 1,\text{ }\mu \geq 0,\text{ }z\in \mathcal{U})
\end{equation*}%
and%
\begin{equation}
\left\vert \arg \left( \left( 1-\lambda \right) \left( \frac{g(w)}{w}\right)
^{\mu }+\lambda g^{\prime }(w)\left( \frac{g(w)}{w}\right) ^{\mu -1}\right)
\right\vert <\frac{\alpha \pi }{2}  \label{eq3}
\end{equation}%
\begin{equation*}
(0<\alpha \leq 1,\text{ }\mu \geq 0,\text{ }w\in \mathcal{U})
\end{equation*}%
where the function $g$ is given by%
\begin{equation}
g(w)=w-a_{2}w^{2}+(2a_{2}^{2}-a_{3})w^{3}-(5a_{2}^{3}-5a_{2}a_{3}+a_{4})w^{4}+....
\label{eq4}
\end{equation}
\end{definition}

Note that for $\lambda =\mu =1,$ the class $\mathcal{N}_{\Sigma }^{1}\left(
\alpha ,1\right) $ introduced and studied by Srivastava et al. \cite{Sri}
and for $\mu =1,$ the class $\mathcal{N}_{\Sigma }^{1}\left( \alpha ,\lambda
\right) $ introduced and studied by Frasin and Aouf \cite{Fra}.

\begin{theorem}
\label{t1}Let $f(z)$ given by (\ref{eq1}) be in the class $\mathcal{N}%
_{\Sigma }^{\mu }\left( \alpha ,\lambda \right) ,$ $0<\alpha \leq 1,$ $%
\lambda \geq 1$ and $\mu \geq 0.$ Then%
\begin{equation}
\left\vert a_{2}\right\vert \leq \frac{2\alpha }{\sqrt{\left( \lambda +\mu
\right) ^{2}+\alpha \left( \mu +2\lambda -\lambda ^{2}\right) }}  \label{eq5}
\end{equation}%
and%
\begin{equation*}
\left\vert a_{3}\right\vert \leq \frac{4\alpha ^{2}}{\left( \lambda +\mu
\right) ^{2}}+\frac{2\alpha }{2\lambda +\mu }.
\end{equation*}
\end{theorem}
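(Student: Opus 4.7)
The plan is the standard Srivastava--Mishra--Gochhayat type approach: rewrite the two defining inequalities using functions in the Carathéodory class, expand everything in Taylor series, equate coefficients, and then extract bounds from Lemma \ref{l1}.

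First I would introduce $p,q\in\mathcal{P}$ with Taylor expansions $p(z)=1+p_1z+p_2z^2+\cdots$ and $q(w)=1+q_1w+q_2w^2+\cdots$ and rewrite \eqref{eq2} and \eqref{eq3} as
\begin{equation*}
(1-\lambda)\Bigl(\tfrac{f(z)}{z}\Bigr)^{\mu}+\lambda f'(z)\Bigl(\tfrac{f(z)}{z}\Bigr)^{\mu-1}=[p(z)]^{\alpha},\qquad
(1-\lambda)\Bigl(\tfrac{g(w)}{w}\Bigr)^{\mu}+\lambda g'(w)\Bigl(\tfrac{g(w)}{w}\Bigr)^{\mu-1}=[q(w)]^{\alpha}.
\end{equation*}
Next I would expand the left-hand sides. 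Writing $f(z)/z=1+a_2z+a_3z^2+\cdots$ and using the generalized binomial series for $(f(z)/z)^{\mu}$ and $(f(z)/z)^{\mu-1}$, a direct (but careful) computation should yield
\begin{equation*}
(1-\lambda)\Bigl(\tfrac{f(z)}{z}\Bigr)^{\mu}+\lambda f'(z)\Bigl(\tfrac{f(z)}{z}\Bigr)^{\mu-1}=1+(\mu+\lambda)a_2z+\Bigl[(\mu+2\lambda)a_3+\tfrac{(\mu-1)(\mu+2\lambda)}{2}a_2^{2}\Bigr]z^{2}+\cdots,
\end{equation*}
and the same formula with $a_2\mapsto-a_2$, $a_3\mapsto 2a_2^{2}-a_3$ for the $g$-side. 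The analogous expansion $[p(z)]^{\alpha}=1+\alpha p_1z+\bigl(\alpha p_2+\tfrac{\alpha(\alpha-1)}{2}p_1^{2}\bigr)z^{2}+\cdots$ is standard.

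Matching coefficients yields four identities. The two first-order identities give $(\mu+\lambda)a_2=\alpha p_1=-\alpha q_1$, so in particular $p_1^{2}=q_1^{2}$ and $2(\mu+\lambda)^{2}a_2^{2}=\alpha^{2}(p_1^{2}+q_1^{2})$. Adding the two second-order identities and using this last relation to eliminate $p_1^{2}+q_1^{2}$ leads after simplification to
\begin{equation*}
\bigl[(\mu+\lambda)^{2}+\alpha(\mu+2\lambda-\lambda^{2})\bigr]a_2^{2}=\alpha^{2}(p_2+q_2),
\end{equation*}
and then $|p_2|,|q_2|\le2$ from Lemma \ref{l1} gives the bound \eqref{eq5} on $|a_2|$. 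Subtracting the two second-order identities kills both the $a_2^{2}$ term coming from $(\mu-1)(\mu+2\lambda)/2$ and the $p_1^{2}-q_1^{2}$ term, leaving $2(\mu+2\lambda)(a_3-a_2^{2})=\alpha(p_2-q_2)$; combining this with the cruder bound $|a_2|^{2}\le4\alpha^{2}/(\mu+\lambda)^{2}$ (from the first-order identity alone) and applying Lemma \ref{l1} yields the stated estimate for $|a_3|$.

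The one delicate step is the coefficient expansion in the second paragraph: the $a_2^{2}$ coefficient on the $z^{2}$ level collects contributions from three different products (the $\binom{\mu}{2}$ and $\binom{\mu-1}{2}$ terms plus the cross term $2(\mu-1)a_2^{2}$ from $f'\cdot(f/z)^{\mu-1}$), and one must verify that these combine into the clean factor $(\mu-1)(\mu+2\lambda)/2$ so that the $\lambda=\mu=1$ case of Srivastava et al. is recovered. Everything else is routine algebra and a single application of Lemma \ref{l1}.
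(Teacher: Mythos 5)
Your proposal is correct and follows essentially the same route as the paper: the same Carathéodory-class reformulation, the same coefficient identities (your $a_2^2$-coefficient $\tfrac{(\mu-1)(\mu+2\lambda)}{2}$ is the paper's $(\mu-1)(\lambda+\tfrac{\mu}{2})$), the same addition/subtraction of the second-order relations, and the same application of Lemma \ref{l1}. The algebraic checks you flag (the collapse to $(\lambda+\mu)^2+\alpha(\mu+2\lambda-\lambda^2)$ and to $2(2\lambda+\mu)(a_3-a_2^2)=\alpha(p_2-q_2)$) all work out exactly as you predict.
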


\begin{proof}
It follows from (\ref{eq2}) and (\ref{eq3}) that%
\begin{equation}
\left( 1-\lambda \right) \left( \frac{f(z)}{z}\right) ^{\mu }+\lambda
f^{\prime }(z)\left( \frac{f(z)}{z}\right) ^{\mu -1}=\left[ p(z)\right]
^{\alpha }  \label{eq6}
\end{equation}%
and%
\begin{equation}
\left( 1-\lambda \right) \left( \frac{g(w)}{w}\right) ^{\mu }+\lambda
g^{\prime }(w)\left( \frac{g(w)}{w}\right) ^{\mu -1}=\left[ q(w)\right]
^{\alpha }  \label{eq7}
\end{equation}%
where $p(z)=1+p_{1}z+p_{2}z^{2}+...$ and $q(w)=1+q_{1}w+q_{2}w^{2}+...$ in $%
\mathcal{P}$.

Now, equating the coefficients in (\ref{eq6}) and (\ref{eq7}), we have%
\begin{eqnarray}
\left( \lambda +\mu \right) a_{2} &=&\alpha p_{1},  \label{eq8} \\
\left( 2\lambda +\mu \right) a_{3}+\left( \mu -1\right) \left( \lambda +%
\frac{\mu }{2}\right) a_{2}^{2} &=&\alpha p_{2}+\frac{\alpha \left( \alpha
-1\right) }{2}p_{1}^{2},  \label{eq9} \\
-\left( \lambda +\mu \right) a_{2} &=&\alpha q_{1}  \label{eq10}
\end{eqnarray}%
and%
\begin{equation}
-\left( 2\lambda +\mu \right) a_{3}+\left( 3+\mu \right) \left( \lambda +%
\frac{\mu }{2}\right) a_{2}^{2}=\alpha q_{2}+\frac{\alpha \left( \alpha
-1\right) }{2}q_{1}^{2}.  \label{eq11}
\end{equation}%
From (\ref{eq8}) and (\ref{eq10}), we obtain%
\begin{equation}
p_{1}=-q_{1}  \label{eq12}
\end{equation}%
and%
\begin{equation}
2\left( \lambda +\mu \right) ^{2}a_{2}^{2}=\alpha ^{2}\left(
p_{1}^{2}+q_{1}^{2}\right) .  \label{eq13}
\end{equation}%
Now, from (\ref{eq9}), (\ref{eq11}) and (\ref{eq13}), we get that%
\begin{eqnarray*}
\left( \mu +1\right) \left( 2\lambda +\mu \right) a_{2}^{2} &=&\alpha \left(
p_{2}+q_{2}\right) +\frac{\alpha \left( \alpha -1\right) }{2}\left(
p_{1}^{2}+q_{1}^{2}\right) \\
&=&\alpha \left( p_{2}+q_{2}\right) +\frac{\left( \alpha -1\right) }{\alpha }%
\left( \lambda +\mu \right) ^{2}a_{2}^{2}.
\end{eqnarray*}%
Therefore, we have 
\begin{equation}
a_{2}^{2}=\frac{\alpha ^{2}\left( p_{2}+q_{2}\right) }{\left( \lambda +\mu
\right) ^{2}+\alpha \left( \mu +2\lambda -\lambda ^{2}\right) }.
\label{eq14}
\end{equation}%
Applying Lemma \ref{l1} for (\ref{eq14}), we obtain%
\begin{equation*}
\left\vert a_{2}\right\vert \leq \frac{2\alpha }{\sqrt{\left( \lambda +\mu
\right) ^{2}+\alpha \left( \mu +2\lambda -\lambda ^{2}\right) }}
\end{equation*}%
which gives us desired estimate on $\left\vert a_{2}\right\vert $ as
asserted in (\ref{eq5}).

Next, in order to find the bound on $\left\vert a_{3}\right\vert ,$ by
subtracting (\ref{eq11}) from (\ref{eq10}), we get%
\begin{equation}
2\left( 2\lambda +\mu \right) a_{3}-2\left( 2\lambda +\mu \right)
a_{2}^{2}=\alpha p_{2}+\frac{\alpha \left( \alpha -1\right) }{2}%
p_{1}^{2}-\left( \alpha q_{2}+\frac{\alpha \left( \alpha -1\right) }{2}%
q_{1}^{2}\right) .  \label{eq15}
\end{equation}%
It follows from (\ref{eq12}), (\ref{eq13}) and (\ref{eq15}) that%
\begin{equation}
a_{3}=\frac{\alpha ^{2}\left( p_{1}^{2}+q_{1}^{2}\right) }{2\left( \lambda
+\mu \right) ^{2}}+\frac{\alpha \left( p_{2}-q_{2}\right) }{2\left( 2\lambda
+\mu \right) }  \label{eq16}
\end{equation}%
Applying Lemma \ref{l1} for (\ref{eq16}), we readily get%
\begin{equation*}
\left\vert a_{3}\right\vert \leq \frac{4\alpha ^{2}}{\left( \lambda +\mu
\right) ^{2}}+\frac{2\alpha }{2\lambda +\mu }.
\end{equation*}%
This completes the proof of Theorem \ref{t1}.
\end{proof}

If we take $\mu =1$ in Theorem \ref{t1}, we have the following corollary.

\begin{corollary}
\label{c1}\cite{Fra} Let $f(z)$ given by (\ref{eq1}) be in the class $%
\mathcal{N}_{\Sigma }^{1}\left( \alpha ,\lambda \right) ,$ $0<\alpha \leq 1$
and $\lambda \geq 1.$ Then%
\begin{equation*}
\left\vert a_{2}\right\vert \leq \frac{2\alpha }{\sqrt{\left( \lambda
+1\right) ^{2}+\alpha \left( 1+2\lambda -\lambda ^{2}\right) }}\text{ and }%
\left\vert a_{3}\right\vert \leq \frac{4\alpha ^{2}}{\left( \lambda
+1\right) ^{2}}+\frac{2\alpha }{2\lambda +1}.
\end{equation*}
\end{corollary}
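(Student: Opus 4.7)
The plan is straightforward: since Corollary \ref{c1} is nothing more than the $\mu=1$ specialization of Theorem \ref{t1}, the proof amounts to verifying that the hypotheses match and then substituting $\mu=1$ into the bounds already established.

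First, I would observe that setting $\mu=1$ in Definition \ref{d1} collapses the defining inequalities (\ref{eq2}) and (\ref{eq3}) to
\[
\left|\arg\bigl((1-\lambda)(f(z)/z) + \lambda f'(z)\bigr)\right| < \frac{\alpha\pi}{2}, \qquad \left|\arg\bigl((1-\lambda)(g(w)/w) + \lambda g'(w)\bigr)\right| < \frac{\alpha\pi}{2},
\]
which is precisely the defining condition for $\mathcal{N}_\Sigma^1(\alpha,\lambda)$ as studied by Frasin and Aouf. Hence any $f$ in $\mathcal{N}_\Sigma^1(\alpha,\lambda)$ lies in $\mathcal{N}_\Sigma^\mu(\alpha,\lambda)$ with $\mu=1$, and the hypotheses $0<\alpha\leq 1$ and $\lambda\geq 1$ of the corollary are a subset of those of the theorem (with the additional choice $\mu=1\geq 0$).

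Next, I would substitute $\mu=1$ directly into the two estimates of Theorem \ref{t1}. The bound (\ref{eq5}) becomes
\[
|a_2| \leq \frac{2\alpha}{\sqrt{(\lambda+1)^2 + \alpha(1+2\lambda-\lambda^2)}},
\]
and the $|a_3|$ estimate reduces to
\[
|a_3| \leq \frac{4\alpha^2}{(\lambda+1)^2} + \frac{2\alpha}{2\lambda+1},
\]
which are exactly the two assertions of the corollary.

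There is no real obstacle here; the corollary is a direct specialization and requires no new computation. The only thing worth double-checking is the matching of hypothesis ranges and the identification of the two definitions when $\mu=1$, which is immediate.
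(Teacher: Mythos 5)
Your proof is correct and matches the paper exactly: the corollary is obtained by substituting $\mu=1$ into both bounds of Theorem \ref{t1}, after noting that the defining conditions of $\mathcal{N}_\Sigma^\mu(\alpha,\lambda)$ reduce to those of the Frasin--Aouf class when $\mu=1$. Nothing further is needed.
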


If we choose $\lambda =\mu =1$ in Theorem \ref{t1}, we get the following
corollary.

\begin{corollary}
\label{c2}\cite{Sri} Let $f(z)$ given by (\ref{eq1}) be in the class $%
\mathcal{N}_{\Sigma }^{1}\left( \alpha ,1\right) ,$ $0<\alpha \leq 1.$ Then%
\begin{equation*}
\left\vert a_{2}\right\vert \leq \alpha \sqrt{\frac{2}{\alpha +2}}\text{ and 
}\left\vert a_{3}\right\vert \leq \frac{\alpha \left( 3\alpha +2\right) }{3}.
\end{equation*}
\end{corollary}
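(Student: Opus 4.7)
The plan is to obtain Corollary \ref{c2} as a direct specialization of Theorem \ref{t1}, which has already been proved. The hypothesis ``$f \in \mathcal{N}_{\Sigma}^{1}(\alpha,1)$'' is precisely the hypothesis of Theorem \ref{t1} with the particular parameter values $\mu = 1$ and $\lambda = 1$. Both values satisfy the constraints $\mu \geq 0$ and $\lambda \geq 1$ required there, so Theorem \ref{t1} applies verbatim and the only work left is to substitute $\lambda = \mu = 1$ into its two conclusions and simplify.

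For the bound on $|a_{2}|$, I would substitute into (\ref{eq5}). The quantity under the square root becomes $(\lambda+\mu)^{2} + \alpha(\mu + 2\lambda - \lambda^{2}) = 4 + \alpha(1 + 2 - 1) = 4 + 2\alpha = 2(\alpha+2)$, so the bound collapses to
\begin{equation*}
|a_{2}| \leq \frac{2\alpha}{\sqrt{2(\alpha+2)}} = \alpha\sqrt{\frac{2}{\alpha+2}},
\end{equation*}
which is exactly the asserted estimate.

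For the bound on $|a_{3}|$, I would substitute into the second inequality of Theorem \ref{t1}, obtaining
\begin{equation*}
|a_{3}| \leq \frac{4\alpha^{2}}{(1+1)^{2}} + \frac{2\alpha}{2+1} = \alpha^{2} + \frac{2\alpha}{3} = \frac{\alpha(3\alpha+2)}{3},
\end{equation*}
again matching the corollary.

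There is no real obstacle here: the argument is purely algebraic verification of the two specializations, with the only non-computational point being the (immediate) check that the chosen values of $\lambda$ and $\mu$ lie in the admissible range of Theorem \ref{t1}.
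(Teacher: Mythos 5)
Your proposal is correct and coincides with the paper's own derivation: the corollary is obtained exactly by setting $\lambda =\mu =1$ in Theorem \ref{t1} and simplifying $4+2\alpha =2(\alpha +2)$ under the radical and $\alpha ^{2}+\tfrac{2\alpha }{3}=\tfrac{\alpha (3\alpha +2)}{3}$. Nothing further is needed.
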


If we choose $\lambda =\mu +1=1$ in Theorem \ref{t1}, we obtain well-known
the class $\mathcal{N}_{\Sigma }^{0}\left( \alpha ,1\right) =\mathcal{S}%
_{\Sigma }^{\ast }[\alpha ]$ of strongly bi-starlike functions of order $%
\alpha $ and get the following corollary.

\begin{corollary}
\label{c51} Let $f(z)$ given by (\ref{eq1}) be in the class $\mathcal{S}%
_{\Sigma }^{\ast }[\alpha ],$ $0<\alpha \leq 1.$ Then%
\begin{equation*}
\left\vert a_{2}\right\vert \leq \frac{2\alpha }{\sqrt{1+\alpha }}\text{ and 
}\left\vert a_{3}\right\vert \leq \alpha (4\alpha +1)\text{ }.
\end{equation*}
\end{corollary}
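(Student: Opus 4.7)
The plan is to derive Corollary~\ref{c51} as an immediate specialization of Theorem~\ref{t1}. The phrase ``$\lambda=\mu+1=1$'' in the statement is interpreted as $\lambda=1$ and $\mu=0$, which lies within the admissible range $\lambda\ge 1,\ \mu\ge 0$ of Theorem~\ref{t1}, so the theorem applies. Hence the proof should consist of two routine checks: first, verify that the class $\mathcal{N}_{\Sigma}^{0}(\alpha,1)$ coincides with $\mathcal{S}_{\Sigma}^{\ast}[\alpha]$, and second, evaluate the bounds in \eqref{eq5} and the accompanying estimate for $|a_{3}|$ at $\lambda=1$, $\mu=0$.

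For the class identification, I would substitute $\lambda=1$ and $\mu=0$ into the left-hand sides of the subordination conditions \eqref{eq2} and \eqref{eq3}. The factor $(1-\lambda)(f(z)/z)^{\mu}$ vanishes because $1-\lambda=0$, while $\lambda f'(z)(f(z)/z)^{\mu-1}$ collapses to $f'(z)\cdot(z/f(z))=zf'(z)/f(z)$. An identical reduction applies to the analogous expression in $g$. Thus \eqref{eq2}--\eqref{eq3} become exactly the defining inequalities $|\arg(zf'(z)/f(z))|<\alpha\pi/2$ and $|\arg(wg'(w)/g(w))|<\alpha\pi/2$, which are the conditions for $f\in\mathcal{S}_{\Sigma}^{\ast}[\alpha]$.

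For the bounds, I would simply substitute $\lambda=1,\ \mu=0$ into the formulas provided by Theorem~\ref{t1}. The denominator appearing in \eqref{eq5} becomes $\sqrt{(1+0)^{2}+\alpha(0+2-1)}=\sqrt{1+\alpha}$, yielding the estimate $|a_{2}|\le 2\alpha/\sqrt{1+\alpha}$. Similarly, the bound on $|a_{3}|$ reduces to $4\alpha^{2}/(1+0)^{2}+2\alpha/(2+0)=4\alpha^{2}+\alpha=\alpha(4\alpha+1)$, matching the statement.

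There is essentially no obstacle here, since the whole argument is a direct substitution into an already-proved theorem; the only point that warrants attention is ensuring that the boundary value $\mu=0$ is permitted by the hypotheses of Theorem~\ref{t1} (it is, as $\mu\ge 0$) and that the algebraic identity $(f(z)/z)^{\mu-1}=z/f(z)$ used when $\mu=0$ is legitimate, which it is on $\mathcal{U}$ since $f(z)/z$ is analytic and non-vanishing there.
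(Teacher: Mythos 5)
Your proposal is correct and is exactly the paper's route: Corollary~\ref{c51} is obtained by setting $\lambda=1$, $\mu=0$ in Theorem~\ref{t1}, noting that the defining conditions then reduce to $|\arg(zf'(z)/f(z))|<\alpha\pi/2$ and $|\arg(wg'(w)/g(w))|<\alpha\pi/2$, and substituting into the bounds to get $2\alpha/\sqrt{1+\alpha}$ and $4\alpha^{2}+\alpha=\alpha(4\alpha+1)$. Your verification that $\mu=0$ lies in the admissible range and that $(f(z)/z)^{-1}=z/f(z)$ is legitimate on $\mathcal{U}$ is a sensible extra check.
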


\section{Coefficient bounds for the function class $\mathcal{N}_{\Sigma }^{%
\protect\mu }\left( \protect\beta ,\protect\lambda \right) $}

\begin{definition}
\label{d2}A function $f(z)$ given by (\ref{eq1}) is said to be in the class $%
\mathcal{N}_{\Sigma }^{\mu }\left( \beta ,\lambda \right) $ if the following
conditions are satisfied:%
\begin{equation}
f\in \Sigma \text{ and }\Re \left( \left( 1-\lambda \right) \left( \frac{f(z)%
}{z}\right) ^{\mu }+\lambda f^{\prime }(z)\left( \frac{f(z)}{z}\right) ^{\mu
-1}\right) >\beta  \label{eq17}
\end{equation}%
\begin{equation*}
(0\leq \beta <1,\text{ }\mu \geq 0,\text{ }\lambda \geq 1,\text{ }z\in 
\mathcal{U})
\end{equation*}%
and%
\begin{equation}
\Re \left( \left( 1-\lambda \right) \left( \frac{g(w)}{w}\right) ^{\mu
}+\lambda g^{\prime }(w)\left( \frac{g(w)}{w}\right) ^{\mu -1}\right) >\beta
\label{eq18}
\end{equation}%
\begin{equation*}
(0\leq \beta <1,\text{ }\mu \geq 0,\text{ }\lambda \geq 1,\text{ }w\in 
\mathcal{U})
\end{equation*}%
where the function $g$ is defined by (\ref{eq4}).
\end{definition}

The class which is satisfy the conditon (\ref{eq17}) except $f\in \Sigma $
also was studied with other aspects by Zhu \cite{Zhu}.

Note that for $\lambda =\mu =1,$ the class $\mathcal{N}_{\Sigma }^{1}\left(
\beta ,1\right) $ introduced and studied by Srivastava et al. \cite{Sri} and
for $\mu =1,$ the class $\mathcal{N}_{\Sigma }^{1}\left( \beta ,\lambda
\right) $ introduced and worked by Frasin and Aouf \cite{Fra}.

\begin{theorem}
\label{t2}Let $f(z)$ given by (\ref{eq1}) be in the class $\mathcal{N}%
_{\Sigma }^{\mu }\left( \beta ,\lambda \right) ,$ $0\leq \beta <1,$ $\lambda
\geq 1$ and $\mu \geq 0.$ Then%
\begin{equation}
\left\vert a_{2}\right\vert \leq \min \left\{ \sqrt{\frac{4\left( 1-\beta
\right) }{\left( \mu +1\right) \left( 2\lambda +\mu \right) }},\text{ }\frac{%
2\left( 1-\beta \right) }{\lambda +\mu }\right\}  \label{eq19}
\end{equation}%
and%
\begin{equation}
\left\vert a_{3}\right\vert \leq \left\{ 
\begin{array}{cc}
\min \left\{ \frac{4\left( 1-\beta \right) }{\left( \mu +1\right) \left(
2\lambda +\mu \right) },\frac{4\left( 1-\beta \right) ^{2}}{\left( \lambda
+\mu \right) ^{2}}+\frac{2\left( 1-\beta \right) }{2\lambda +\mu }\right\} ;
& 0\leq \mu <1 \\ 
\frac{2\left( 1-\beta \right) }{2\lambda +\mu }; & \mu \geq 1%
\end{array}%
\right. .  \label{eq20}
\end{equation}
\end{theorem}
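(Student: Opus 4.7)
The plan is to adapt the argument in Theorem~\ref{t1} by replacing the representation $[p(z)]^{\alpha}$, which was dictated by the strong-bi-univalence argument condition, with the Carath\'eodory-type representation appropriate to a real-part constraint. Since $f\in\mathcal{N}_\Sigma^\mu(\beta,\lambda)$ means exactly that the bracketed quantity in (\ref{eq17}) has real part exceeding $\beta$, I would write
\begin{equation*}
(1-\lambda)\left(\tfrac{f(z)}{z}\right)^{\mu}+\lambda f'(z)\left(\tfrac{f(z)}{z}\right)^{\mu-1}=\beta+(1-\beta)\,p(z),
\end{equation*}
and analogously for $g$ with a function $q$, where $p,q\in\mathcal{P}$. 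Expanding both sides as formal power series and matching the coefficients of $z,z^{2}$ (resp.\ $w,w^{2}$) produces four linear relations of the same shape as (\ref{eq8})--(\ref{eq11}), with every $\alpha p_{i}$ replaced by $(1-\beta)p_{i}$ and with the quadratic $\alpha(\alpha-1)/2$--terms absent.

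To estimate $|a_{2}|$ I would use two different combinations of these relations. The linear relation $(\lambda+\mu)a_{2}=(1-\beta)p_{1}$ together with Lemma~\ref{l1} gives $|a_{2}|\leq 2(1-\beta)/(\lambda+\mu)$. Adding the two second-order relations eliminates $a_{3}$ and yields $(\mu+1)(2\lambda+\mu)a_{2}^{2}=(1-\beta)(p_{2}+q_{2})$, hence $|a_{2}|\leq 2\sqrt{(1-\beta)/[(\mu+1)(2\lambda+\mu)]}$. Taking the minimum of the two is exactly~(\ref{eq19}).

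For $|a_{3}|$ I would again derive two independent estimates and then combine them. Subtracting the two second-order relations yields $a_{3}=a_{2}^{2}+(1-\beta)(p_{2}-q_{2})/[2(2\lambda+\mu)]$; inserting $a_{2}^{2}=(1-\beta)^{2}(p_{1}^{2}+q_{1}^{2})/[2(\lambda+\mu)^{2}]$ (which follows from $p_{1}=-q_{1}$ and the linear relation) and applying Lemma~\ref{l1} gives the first bound $4(1-\beta)^{2}/(\lambda+\mu)^{2}+2(1-\beta)/(2\lambda+\mu)$. For the second bound, I would substitute the value of $a_{2}^{2}$ coming from the sum identity directly into one of the second-order relations; after cancellation this collapses to
\begin{equation*}
a_{3}=\frac{(1-\beta)\bigl[(\mu+3)p_{2}-(\mu-1)q_{2}\bigr]}{2(\mu+1)(2\lambda+\mu)},
\end{equation*}
so Lemma~\ref{l1} yields $|a_{3}|\leq (1-\beta)[(\mu+3)+|\mu-1|]/[(\mu+1)(2\lambda+\mu)]$. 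Evaluating $|\mu-1|$ in each regime reduces this to $2(1-\beta)/(2\lambda+\mu)$ when $\mu\geq 1$ and to $4(1-\beta)/[(\mu+1)(2\lambda+\mu)]$ when $0\leq\mu<1$. For $\mu\geq 1$ this second bound is always at least as sharp as the first, so only it needs to be reported; for $0\leq\mu<1$ combining both estimates yields the piecewise expression~(\ref{eq20}).

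The main technical chore is the binomial expansion of $(f(z)/z)^{\mu}$ to order~$z^{2}$ for general real $\mu$, which is what produces the combinations $(\mu-1)(\lambda+\mu/2)$ and $(3+\mu)(\lambda+\mu/2)$ in the second-order relations. The only genuinely new step compared with Theorem~\ref{t1} is the algebraic elimination of $a_{2}^{2}$ leading to the displayed identity for $a_{3}$, together with the case split on the sign of $\mu-1$ that determines the piecewise form of the $|a_{3}|$ bound.
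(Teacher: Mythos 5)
Your proposal is correct and follows essentially the same route as the paper's own proof: the Carath\'eodory representation $\beta+(1-\beta)p(z)$, the two competing estimates for $|a_2|$ from the linear relation and from the sum of the second-order relations, and the two estimates for $|a_3|$ obtained by eliminating $a_2^2$ in the two different ways, followed by the case split on the sign of $\mu-1$. Your displayed identity for $a_3$ is exactly the paper's equation (\ref{e5}), and your observation that for $\mu\geq 1$ the bound $2(1-\beta)/(2\lambda+\mu)$ dominates is the same remark the paper makes when comparing (\ref{e41}) and (\ref{e7}).
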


\begin{proof}
It follows from (\ref{eq17}) and (\ref{eq18}) that there exist $p,q\in 
\mathcal{P}$ such that%
\begin{equation}
\left( 1-\lambda \right) \left( \frac{f(z)}{z}\right) ^{\mu }+\lambda
f^{\prime }(z)\left( \frac{f(z)}{z}\right) ^{\mu -1}=\beta +\left( 1-\beta
\right) p(z)  \label{eq21}
\end{equation}%
and%
\begin{equation}
\left( 1-\lambda \right) \left( \frac{g(w)}{w}\right) ^{\mu }+\lambda
g^{\prime }(w)\left( \frac{g(w)}{w}\right) ^{\mu -1}=\beta +\left( 1-\beta
\right) q(w)  \label{eq22}
\end{equation}%
where $p(z)=1+p_{1}z+p_{2}z^{2}+...$ and $q(w)=1+q_{1}w+q_{2}w^{2}+...$. As
in the proof of Theorem \ref{t1}, by suitably comparing coefficients in (\ref%
{eq21}) and (\ref{eq22}), we get%
\begin{eqnarray}
\left( \lambda +\mu \right) a_{2} &=&\left( 1-\beta \right) p_{1},
\label{eq23} \\
\left( 2\lambda +\mu \right) a_{3}+\left( \mu -1\right) \left( \lambda +%
\frac{\mu }{2}\right) a_{2}^{2} &=&\left( 1-\beta \right) p_{2},
\label{eq24} \\
-\left( \lambda +\mu \right) a_{2} &=&\left( 1-\beta \right) q_{1}
\label{eq25}
\end{eqnarray}%
and%
\begin{equation}
-\left( 2\lambda +\mu \right) a_{3}+\left( 3+\mu \right) \left( \lambda +%
\frac{\mu }{2}\right) a_{2}^{2}=\left( 1-\beta \right) q_{2}.  \label{eq26}
\end{equation}%
Now, considering (\ref{eq23}) and (\ref{eq25}), we obtain%
\begin{equation}
p_{1}=-q_{1}  \label{eq27}
\end{equation}%
and%
\begin{equation}
2\left( \lambda +\mu \right) ^{2}a_{2}^{2}=\left( 1-\beta \right) ^{2}\left(
p_{1}^{2}+q_{1}^{2}\right) .  \label{eq28}
\end{equation}%
Also, from (\ref{eq24}) and (\ref{eq26}), we have%
\begin{equation}
\left( \mu +1\right) \left( 2\lambda +\mu \right) a_{2}^{2}=\left( 1-\beta
\right) \left( p_{2}+q_{2}\right) .  \label{e1}
\end{equation}%
Therefore, from the equalities (\ref{eq28}) and (\ref{e1}) we find that%
\begin{equation*}
\left\vert a_{2}\right\vert ^{2}\leq \frac{\left( 1-\beta \right) ^{2}}{%
2\left( \lambda +\mu \right) ^{2}}\left( \left\vert p_{1}\right\vert
^{2}+\left\vert q_{1}\right\vert ^{2}\right)
\end{equation*}%
and 
\begin{equation*}
\left\vert a_{2}\right\vert ^{2}\leq \frac{\left( 1-\beta \right) }{\left(
\mu +1\right) \left( 2\lambda +\mu \right) }\left( \left\vert
p_{2}\right\vert +\left\vert q_{2}\right\vert \right)
\end{equation*}%
respectively, and applying Lemma \ref{l1}, we obtain%
\begin{equation}
\left\vert a_{2}\right\vert \leq \frac{2\left( 1-\beta \right) }{\lambda
+\mu }  \label{e2}
\end{equation}%
and 
\begin{equation}
\left\vert a_{2}\right\vert \leq 2\sqrt{\frac{1-\beta }{\left( \mu +1\right)
\left( 2\lambda +\mu \right) }}  \label{e3}
\end{equation}%
respectively. If we compare the right sides of the inequalities (\ref{e2})
and (\ref{e3}) we obtain desired estimate on $\left\vert a_{2}\right\vert $
as asserted in (\ref{eq19}).

Next, in order to find the bound on $\left\vert a_{3}\right\vert ,$ by
subtracting (\ref{eq26}) from (\ref{eq24}), we get%
\begin{equation}
2\left( 2\lambda +\mu \right) a_{3}-2\left( 2\lambda +\mu \right)
a_{2}^{2}=\left( 1-\beta \right) \left( p_{2}-q_{2}\right) ,  \label{e4}
\end{equation}%
which, upon substitution of the value of $a_{2}^{2}$ from (\ref{eq28}),
yields%
\begin{equation}
a_{3}=\frac{\left( 1-\beta \right) ^{2}\left( p_{1}^{2}+q_{1}^{2}\right) }{%
2\left( \lambda +\mu \right) ^{2}}+\frac{\left( 1-\beta \right) \left(
p_{2}-q_{2}\right) }{2\left( 2\lambda +\mu \right) }.  \label{eq29}
\end{equation}

Applying Lemma \ref{l1} for (\ref{eq29}), we readily get%
\begin{equation}
\left\vert a_{3}\right\vert \leq \frac{4\left( 1-\beta \right) ^{2}}{\left(
\lambda +\mu \right) ^{2}}+\frac{2\left( 1-\beta \right) }{2\lambda +\mu }.
\label{e41}
\end{equation}

On the other hand, by using the equation (\ref{e1}) in (\ref{e4}), we obtain%
\begin{equation}
a_{3}=\frac{1-\beta }{2(2\lambda +\mu )}\left[ \frac{\mu +3}{\mu +1}p_{2}+%
\frac{1-\mu }{\mu +1}q_{2}\right] .  \label{e5}
\end{equation}%
and applying Lemma \ref{l1} for (\ref{e5}), we get%
\begin{equation}
\left\vert a_{3}\right\vert \leq \frac{1-\beta }{2\lambda +\mu }\left[ \frac{%
\mu +3}{\mu +1}+\frac{\left\vert 1-\mu \right\vert }{\mu +1}\right] .
\label{e6}
\end{equation}

Now, let us investigate the bounds on $\left\vert a_{3}\right\vert $
according to $\mu .$

\textbf{Case1. }We suppose that let $0\leq \mu <1,$ thus from (\ref{e6})%
\begin{equation}
\left\vert a_{3}\right\vert \leq \frac{4\left( 1-\beta \right) }{(2\lambda
+\mu )(\mu +1)}  \label{e8}
\end{equation}%
which is the first part of assertion (\ref{eq20}).

\textbf{Case2. }We suppose that let $\mu \geq 1,$ thus from (\ref{e6}) we
easily see that%
\begin{equation}
\left\vert a_{3}\right\vert \leq \frac{2\left( 1-\beta \right) }{2\lambda +1}
\label{e7}
\end{equation}%
which is the second part of assertion (\ref{eq20}). When are compared the
right sides of inequalities (\ref{e41}) and (\ref{e7}) we see that the right
side of (\ref{e7}) smaller than the right side of (\ref{e41}).

This completes the proof of Theorem \ref{t2}.
\end{proof}

If we write $\mu =1$ in first parts of assertions (\ref{eq19}) and (\ref%
{eq20}) of Theorem \ref{t2}, we have the following corollary.

\begin{corollary}
\label{c3} Let $f(z)$ given by (\ref{eq1}) be in the class $\mathcal{N}%
_{\Sigma }^{1}\left( \beta ,\lambda \right) ,$ $0\leq \beta <1$ and $\lambda
\geq 1.$ Then%
\begin{equation*}
\left\vert a_{2}\right\vert \leq \min \left\{ \sqrt{\frac{2\left( 1-\beta
\right) }{2\lambda +1}},\text{ }\frac{2\left( 1-\beta \right) }{\lambda +1}%
\right\} \text{ and }\left\vert a_{3}\right\vert \leq \frac{2\left( 1-\beta
\right) }{2\lambda +1}.
\end{equation*}
\end{corollary}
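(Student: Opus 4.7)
The plan is to obtain Corollary \ref{c3} as an immediate specialization of Theorem \ref{t2} at the parameter value $\mu=1$. Because Theorem \ref{t2} already supplies upper bounds on $|a_2|$ and $|a_3|$ valid for every admissible $\mu\ge 0$, there is no need to revisit the coefficient identities (\ref{eq23})--(\ref{eq26}) or to invoke Lemma \ref{l1} afresh; the whole argument reduces to substitution followed by algebraic simplification.

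First, for the bound on $|a_2|$, I would set $\mu=1$ inside (\ref{eq19}). The first entry of the minimum collapses to $\sqrt{4(1-\beta)/[(1+1)(2\lambda+1)]}=\sqrt{2(1-\beta)/(2\lambda+1)}$, while the second entry becomes $2(1-\beta)/(\lambda+1)$. Taking the minimum of these two quantities yields precisely the stated estimate on $|a_2|$.

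Next, for the bound on $|a_3|$, I would insert $\mu=1$ into (\ref{eq20}). Since $\mu=1$ lies on the boundary between the two listed cases, I would invoke the second branch (which is valid for $\mu\ge 1$) to obtain $|a_3|\le 2(1-\beta)/(2\lambda+\mu)\big|_{\mu=1}=2(1-\beta)/(2\lambda+1)$. As a sanity check, the first branch at $\mu=1$ would read $\min\bigl\{4(1-\beta)/[2(2\lambda+1)],\;4(1-\beta)^2/(\lambda+1)^2+2(1-\beta)/(2\lambda+1)\bigr\}$, whose first argument also equals $2(1-\beta)/(2\lambda+1)$, so the two branches agree at $\mu=1$ and the specialization is unambiguous.

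The only point requiring attention is therefore the behavior of the piecewise bound (\ref{eq20}) at its branching point $\mu=1$; the check above confirms compatibility, so no genuine obstacle arises. If one preferred to sidestep this entirely, an alternative is to rerun the final steps of the proof of Theorem \ref{t2}, starting from (\ref{e4})--(\ref{e5}), with $\mu=1$ imposed throughout — this trivializes the term $(1-\mu)/(\mu+1)$ in (\ref{e5}) and leads directly to the claimed bound on $|a_3|$ without any case distinction.
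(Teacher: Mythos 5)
Your proposal is correct and matches the paper's own derivation: the corollary is obtained simply by setting $\mu=1$ in Theorem \ref{t2} and simplifying, and your observation that the two branches of (\ref{eq20}) agree at $\mu=1$ resolves the only (minor) ambiguity. Nothing further is needed.
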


If we choose $\lambda =\mu =1$ in first parts of assertions (\ref{eq19}) and
(\ref{eq20}) of Theorem \ref{t2}, we have the following corollary.

\begin{corollary}
\label{c4} Let $f(z)$ given by (\ref{eq1}) be in the class $\mathcal{N}%
_{\Sigma }^{1}\left( \beta ,1\right) ,$ $0\leq \beta <1.$ Then%
\begin{equation*}
\left\vert a_{2}\right\vert \leq \left\{ 
\begin{array}{cc}
\sqrt{\frac{2\left( 1-\beta \right) }{3}}; & 0\leq \beta <\frac{1}{3} \\ 
1-\beta & \frac{1}{3}\leq \beta <1%
\end{array}%
\right. \text{ and }\left\vert a_{3}\right\vert \leq \frac{2\left( 1-\beta
\right) }{3}.
\end{equation*}
\end{corollary}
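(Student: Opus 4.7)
The plan is to derive Corollary \ref{c4} as the direct specialization of Theorem \ref{t2} at $\lambda = \mu = 1$, so the entire argument reduces to one evaluation of the minimum in (\ref{eq19}) and one boundary check in (\ref{eq20}).

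First I would substitute $\lambda = \mu = 1$ into the two candidates inside the minimum in (\ref{eq19}):
\begin{equation*}
\left.\sqrt{\frac{4(1-\beta)}{(\mu+1)(2\lambda+\mu)}}\right|_{\lambda=\mu=1} = \sqrt{\frac{2(1-\beta)}{3}}, \qquad \left.\frac{2(1-\beta)}{\lambda+\mu}\right|_{\lambda=\mu=1} = 1-\beta.
\end{equation*}
To decide which is smaller, I would square both (both are nonnegative): $\sqrt{2(1-\beta)/3} \leq 1-\beta$ is equivalent to $2(1-\beta)/3 \leq (1-\beta)^2$, which, after dividing through by $1-\beta > 0$, becomes $1-\beta \geq 2/3$, i.e.\ $\beta \leq 1/3$. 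So the first candidate realizes the minimum precisely on $[0, 1/3]$ and the second on $[1/3, 1)$, recovering the piecewise bound for $|a_2|$ stated in the corollary.

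For $|a_3|$, I would note that $\mu = 1$ sits on the boundary between Case~1 ($0 \leq \mu < 1$) and Case~2 ($\mu \geq 1$) of (\ref{eq20}); conveniently, both branches agree at $\lambda = \mu = 1$. The Case~2 formula yields $2(1-\beta)/(2\lambda + \mu) = 2(1-\beta)/3$, while the Case~1 formula yields $4(1-\beta)/[(\mu+1)(2\lambda+\mu)] = 4(1-\beta)/6 = 2(1-\beta)/3$. Either way, $|a_3| \leq 2(1-\beta)/3$.

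There is no substantive obstacle here: the corollary is a routine plug-in to Theorem \ref{t2}, and the only piece of genuine work is the one-line algebraic comparison between $\sqrt{2(1-\beta)/3}$ and $1-\beta$ that resolves the minimum into the stated piecewise form.
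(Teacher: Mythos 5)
Your proposal is correct and follows exactly the paper's own route: Corollary \ref{c4} is obtained by setting $\lambda=\mu=1$ in Theorem \ref{t2}, and your explicit comparison of $\sqrt{2(1-\beta)/3}$ with $1-\beta$ (equivalent to $\beta\lessgtr 1/3$) is precisely the computation needed to turn the minimum in (\ref{eq19}) into the stated piecewise bound, while both branches of (\ref{eq20}) indeed give $2(1-\beta)/3$ at $\mu=1$. No gaps.
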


If we take $\lambda =\mu +1=1$ in Theorem \ref{t2}, we obtain well-known the
class $\mathcal{N}_{\Sigma }^{0}\left( \beta ,1\right) =\mathcal{S}_{\Sigma
}^{\ast }\left( \beta \right) $ of bi-starlike functions of order $\beta $
and get the following corollary.

\begin{corollary}
\label{c5} Let $f(z)$ given by (\ref{eq1}) be in the class $\mathcal{S}%
_{\Sigma }^{\ast }\left( \beta \right) ,$ $0\leq \beta <1.$ Then%
\begin{equation*}
\left\vert a_{2}\right\vert \leq \sqrt{2\left( 1-\beta \right) }\text{ and }%
\left\vert a_{3}\right\vert \leq \left\{ 
\begin{array}{cc}
2\left( 1-\beta \right) ; & 0\leq \beta <\frac{3}{4} \\ 
(1-\beta )(5-4\beta ); & \frac{3}{4}\leq \beta <1%
\end{array}%
\right. \text{ }.
\end{equation*}
\end{corollary}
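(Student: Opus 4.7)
The plan is to obtain Corollary \ref{c5} as a direct specialization of Theorem \ref{t2}. As the paragraph preceding the corollary records, the choice $\lambda=1$ and $\mu=0$ converts the governing expression
\[
(1-\lambda)\left(\tfrac{f(z)}{z}\right)^{\mu}+\lambda f'(z)\left(\tfrac{f(z)}{z}\right)^{\mu-1}
\]
into $zf'(z)/f(z)$ (and analogously for $g$), so that conditions (\ref{eq17})--(\ref{eq18}) reduce to the bi-starlike condition of order $\beta$. Hence the corollary is precisely the statement of Theorem \ref{t2} at $(\lambda,\mu)=(1,0)$, and I would simply substitute these values into (\ref{eq19})--(\ref{eq20}) and simplify.

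First, for $|a_2|$: plugging $\lambda=1$, $\mu=0$ into (\ref{eq19}) gives the two candidates
\[
\sqrt{\tfrac{4(1-\beta)}{(0+1)(2+0)}}=\sqrt{2(1-\beta)} \qquad \text{and} \qquad \tfrac{2(1-\beta)}{1+0}=2(1-\beta).
\]
Since $|a_2|$ is bounded by the minimum of these, it is in particular bounded by the first, which is exactly the asserted inequality $|a_2|\le \sqrt{2(1-\beta)}$. No further analysis is needed.

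Next, for $|a_3|$: because $\mu=0<1$, we are in the first branch of (\ref{eq20}), giving the two candidates
\[
\tfrac{4(1-\beta)}{(0+1)(2+0)}=2(1-\beta) \qquad \text{and} \qquad \tfrac{4(1-\beta)^2}{(1+0)^2}+\tfrac{2(1-\beta)}{2+0}=(1-\beta)(5-4\beta).
\]
To decide which of these two is the minimum I would compare them by dividing through by the positive factor $1-\beta$, leaving the elementary comparison of $2$ with $5-4\beta$. These coincide at $\beta=3/4$; for $\beta<3/4$ we have $5-4\beta>2$, so the minimum is $2(1-\beta)$, while for $\beta\ge 3/4$ the inequality reverses and the minimum is $(1-\beta)(5-4\beta)$. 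This is exactly the two-line formula in the corollary.

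The argument is therefore entirely mechanical: identify the class via the $(\lambda,\mu)=(1,0)$ specialization, substitute into the bounds already proved in Theorem \ref{t2}, and carry out the one elementary comparison that produces the cutoff at $\beta=3/4$. The only step requiring any care is that comparison, and it is a trivial linear inequality; there is no genuine obstacle.
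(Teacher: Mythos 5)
Your proposal is correct and follows exactly the route the paper intends: the corollary is obtained by substituting $(\lambda,\mu)=(1,0)$ into Theorem \ref{t2}, and your arithmetic (including the observation that the stated $|a_2|$ bound is the first entry of the minimum in (\ref{eq19}), hence implied by it, and the comparison of $2$ with $5-4\beta$ producing the cutoff at $\beta=3/4$) is accurate. The paper itself omits these computations, so you have simply made explicit what it leaves implicit.
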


\end{document}